% !TEX TS-program = pdflatexmk
\documentclass[a4paper,11pt]{amsart}
\usepackage{hyperref,fancyhdr,mathrsfs,amsmath,amscd,amsthm,amsfonts,latexsym,amssymb,stmaryrd}
\usepackage[all]{xy}

\voffset 3mm
\topmargin 10mm
\evensidemargin  5mm
\oddsidemargin  5mm
\textwidth  145mm
\textheight 205mm
\headsep 5mm
\marginparsep 2mm
\marginparwidth 20mm
\footskip 0mm
\headheight 5mm

\linespread{1.1}

\pagestyle{fancy}
\lhead[{\footnotesize \thepage}]{\footnotesize Quaternionic-like manifolds and homogeneous twistor spaces}
\chead[]{}
\rhead[\footnotesize Radu Pantilie]{\footnotesize \thepage}
\lfoot{}
\cfoot{}
\rfoot{}

\DeclareMathOperator{\dif}{d}

\newcommand{\Cal}{\mathcal{C}}

\newcommand{\ol}{\mathcal{O}}

\def \o{\omega}

\def \phi{\varphi}
\def \Phi{\varPhi} 
\def \Psi{\varPsi} 
\def \p{\pi}
\def \r{\rho}
\def \s{\sigma}
\def \t{\tau}

\def \R{\mathbb{R}}

\def \C{\mathbb{C}\,}

\def\widecheckg{g^{\hspace*{-2.5pt}\vbox to 5pt{\hbox to
0pt{\LARGE$\check{}$}}}\hspace*{2pt}}

\def\widecheckl{\lambda^{\hspace*{-3.5pt}\vbox to 8pt{\hbox to
0pt{\LARGE$\check{}$}}}\hspace*{2pt}}

\hyphenation{pro-duct}

\begin{document}

\title{Quaternionic-like manifolds\\ 
and homogeneous twistor spaces}
\author{Radu Pantilie} 
\thanks{The author acknowledges partial financial support from the Romanian National Authority for
Scientific Research, CNCS-UEFISCDI, project number PN-II-ID-PCE-2011-3-0362}
\email{\href{mailto:radu.pantilie@imar.ro}{radu.pantilie@imar.ro}}
\address{R.~Pantilie, Institutul de Matematic\u a ``Simion~Stoilow'' al Academiei Rom\^ane,
C.P. 1-764, 014700, Bucure\c sti, Rom\^ania}
\subjclass[2010]{Primary 53C28, Secondary 53C26} 
\keywords{quaternionic geometry, twistor theory, embeddings of the Riemann sphere}

\newtheorem{thm}{Theorem}[section]
\newtheorem{lem}[thm]{Lemma}
\newtheorem{cor}[thm]{Corollary}
\newtheorem{prop}[thm]{Proposition} 

\theoremstyle{definition}

\newtheorem{defn}[thm]{Definition}
\newtheorem{rem}[thm]{Remark}
\newtheorem{exm}[thm]{Example}

\numberwithin{equation}{section}
 
\begin{abstract} 
Motivated by the quaternionic geometry corresponding to the homogeneous complex manifolds endowed with (holomorphically) embedded spheres, 
we introduce and initiate the study of the `quaternionic-like manifolds'. These contain, as particular subclasses, the CR quaternionic and the $\r$-quaternionic 
manifolds. Moreover, the notion of `heaven space' finds its adequate level of generality in this setting: 
(essentially) any real analytic quaternionic-like manifold admits a (germ) unique heaven space, which is a $\r$-quaternionic manifold. 
We, also, give a natural construction of homogeneous complex manifolds endowed with 
embedded spheres, thus, emphasizing the abundance of the quaternionic-like manifolds. 
\end{abstract} 

\maketitle 
\thispagestyle{empty} 
\vspace{-4mm} 
\begin{center}
\emph{This paper is dedicated to the 150th anniversary of the Romanian Academy.}
\end{center}

\section*{Introduction} 

\indent 
The quaternionic geometry is, informally, that part of the geometry where the points are spheres. This means that the objects 
of quaternionic geometry are parameter spaces for families of (Riemann) spheres embedded into CR manifolds, the latter, 
being called the \emph{twistor spaces} (of the corresponding objects). When the twistor spaces are complex manifolds 
and the normal bundles of the twistor spheres are nonnegative we obtain what we call $\r$-quaternionic manifolds \cite{Pan-qgfs}\,.\\ 
\indent 
This paper grew out from our aim to understand the $\r$-quaternionic manifolds $M$ whose twistor spaces are homogeneous. 
As $M$ is not necessarily homogeneous (see Example \ref{exm:nil_orbits}\,, below), one of the main tasks is to describe 
the intrinsic geometry of the corresponding (local) orbits. This is, further, motivated by the idea that, because the corresponding twistor space 
is homogeneous, each such orbit should, somehow, determine the geometry of $M$.\\ 
\indent 
We are, thus, led to consider the \emph{quaternionic-like manifolds} (note that, this expression was previously used with a different meaning) 
which, at the linear level, are given by nonconstant holomorphic maps from the sphere to the complex Grassmannian of a vector space 
(these maps are required to be compatible with the conjugations; see Section \ref{section:q-like_linear}\,, below, for details). 
Then, similarly to the classical linear quaternionic structures, the dual of a linear quaternionic-like structure 
is quaternionic-like. Furthermore, the class of quaternionic-like manifolds contains two important subclasses: the CR quaternionic manifolds of \cite{fq}\,, 
and the $\r$-quaternionic manifolds of \cite{Pan-qgfs} (in particular, the co-CR quaternionic manifolds of \cite{fq_2}\,). 
Moreover, the notion of \emph{heaven space} (see \cite{fq}\,) finds its adequate level of generality in this setting (Theorem \ref{thm:heaven_space_for_q-like}\,). 
For example, if $M$ is a $\r$-quaternionic manifold whose twistor space is homogeneous then each orbit, of the induced local action on $M$, 
is endowed with a quaternionic-like structure whose heaven space is $M$.\\ 
\indent 
We, also, give a natural construction of homogeneous complex manifolds endowed with embedded spheres (with nonnegative normal bundles) 
which is then characterized and illustrated with several classes of examples (Section \ref{section:constr_hqo}\,). 
This emphasizes the abundance of the quaternionic-like manifolds.

\section{Quaternionic-like vector spaces} \label{section:q-like_linear} 

\indent 
This section is dedicated to the study of the following notions. 

\begin{defn} \label{defn:q-like} 
A \emph{linear quaternionic-like structure} on a (real) vector space $U$ is a nonconstant holomorphic map from the sphere 
to the complex Grassmannian ${\rm Gr}_k\bigl(U^{\C\!}\bigr)$ which intertwines the antipodal map and the conjugation determined by 
the conjugation of $U^{\C\!}$, where $k\in\mathbb{N}$\,, $k\leq\dim U$.\\ 
\indent 
A \emph{quaternionic-like vector space} is a vector space endowed with a linear quaternionic-like structure.  
\end{defn} 

\indent 
If $U$ is endowed with a linear quaternionic-like structure, we denote by $U^z$ the subspace of $U^{\C\!}$ 
which is the image of $z\bigl(\in S^2\bigr)$ through the corresponding map. Then the \emph{dual} linear quaternionic-like structure on $U^*$ is 
given by associating to each $z$ the annihilator of $U^z$. Also, a \emph{quaternionic-like linear map},  
between the quaternionic-like vector spaces $U$ and $V$, is a pair $(\psi,T)$\,, where $\psi:U\to V$ is linear 
and $T$ is an orientation preserving isometry of the sphere 
such that $\psi\bigl(U^z\bigr)\subseteq V^{T(z)}$, for any $z$\,.  

\begin{prop} \label{prop:basic_q-like_corresp} 
There exists a functorial (bijective) correspondence between linear qua\-ter\-ni\-o\-nic-like structures on $U$  
and nontrivial (that is, nonsplitting) exact sequences 
$$0\longrightarrow\mathcal{U}_-\longrightarrow t\times U^{\C\!}\longrightarrow\mathcal{U}_+\longrightarrow0$$  
of holomorphic vector bundles over the sphere $t$\,, endowed with conjugations covering the antipodal map.\\ 
\indent 
Furthermore, under this correspondence, dual linear quaternionic-like structures correspond to dual exact sequences.  
\end{prop}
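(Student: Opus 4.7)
The plan is to realize the bijection through the universal (tautological) exact sequence on the complex Grassmannian, and then deduce functoriality and duality from naturality of this construction.

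Given a linear quaternionic-like structure, that is, an equivariant holomorphic embedding $\iota\colon t\hookrightarrow {\rm Gr}_k\bigl(U^{\C\!}\bigr)$, pull back the universal exact sequence
\[
0\longrightarrow\mathcal{S}\longrightarrow{\rm Gr}_k\bigl(U^{\C\!}\bigr)\times U^{\C\!}\longrightarrow\mathcal{Q}\longrightarrow0
\]
to obtain the desired $0\to\mathcal{U}_-\to t\times U^{\C\!}\to\mathcal{U}_+\to0$, with $\mathcal{U}_-=\iota^{*}\mathcal{S}$ and $\mathcal{U}_+=\iota^{*}\mathcal{Q}$, so that $(\mathcal{U}_-)_z=U^z$. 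The conjugation on $t\times U^{\C\!}$ induced by the antipodal map on $t$ and the conjugation of $U^{\C\!}$ preserves $\mathcal{U}_-$ thanks to the intertwining condition $U^{\bar z}=\overline{U^z}$, so it restricts and descends to conjugations on $\mathcal{U}_{\pm}$ covering the antipodal map. A splitting would realise $\mathcal{U}_-$ as a direct summand of the trivial bundle on $\mathbb{P}^1\cong t$ and hence, by Birkhoff--Grothendieck, force $\mathcal{U}_-$ itself to be trivial; its global sections would then span a fixed subspace of $U^{\C\!}$, making $\iota$ constant. So the sequence is non-split.

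Conversely, from a non-split sequence the inclusion $\mathcal{U}_-\hookrightarrow t\times U^{\C\!}$ classifies a holomorphic map $f\colon t\to{\rm Gr}_k\bigl(U^{\C\!}\bigr)$, $f(z)=(\mathcal{U}_-)_z$, and the conjugation on $\mathcal{U}_-$ covering the antipodal map gives the intertwining property. Non-splitting forces $f$ to be non-constant (by the reverse of the argument above), and combining this with equivariance and the Birkhoff--Grothendieck type of $\mathcal{U}_-$ inside the trivial bundle shows that $f$ is in fact an embedding. Mutual inverseness of the two constructions follows from the universal property of the Grassmannian. Functoriality is then immediate: a quaternionic-like linear map $(\psi,T)$ induces a morphism of trivial bundles covering $T$, which restricts to a morphism $\mathcal{U}_-\to T^{-1}\mathcal{V}_-$ exactly because $\psi\bigl(U^z\bigr)\subseteq V^{T(z)}$, and passes to the quotients. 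For duality, dualise the exact sequence to obtain $0\to\mathcal{U}_+^{*}\to t\times\bigl(U^{\C\!}\bigr)^{*}\to\mathcal{U}_-^{*}\to0$; the middle term is identified with $t\times\bigl(U^{*}\bigr)^{\C\!}$, and the fibre of $\mathcal{U}_+^{*}$ at $z$ consists of linear functionals annihilating $U^z$, which is by definition the fibre $(U^{*})^z$ of the dual linear quaternionic-like structure.

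The main delicate point is the reverse direction: while non-splitting translates readily into non-constancy of the classifying map via Birkhoff--Grothendieck, upgrading non-constancy to a genuine embedding (injective with everywhere injective derivative) requires exploiting the conjugation-equivariance together with the constraints on the splitting type of $\mathcal{U}_-$ as a subbundle of the trivial bundle. Once this is settled, the remaining assertions---functoriality and compatibility with duality---are formal consequences of the universal property of the Grassmannian and the functoriality of pull-back.
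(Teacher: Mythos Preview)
Your approach is the same as the paper's---pull back the tautological exact sequence over the Grassmannian---only spelled out in far greater detail: the paper's proof is a single sentence recording that $\mathcal{U}_-$ is the restriction of the tautological subbundle, and leaves the rest implicit. Your treatment of the forward direction, of functoriality, and of duality is correct and matches what the paper has in mind.

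There is, however, a genuine gap in the reverse direction, and you have in fact put your finger on it yourself. Nonsplitting of the sequence is equivalent only to \emph{nonconstancy} of the classifying map $f\colon t\to{\rm Gr}_k\bigl(U^{\C\!}\bigr)$, not to $f$ being an embedding. Your suggested upgrade---``combining this with equivariance and the Birkhoff--Grothendieck type of $\mathcal{U}_-$''---does not go through: equivariance with respect to the antipodal map merely forces the set of critical points (or the fibres of $f$) to be stable under the antipodal involution, which is no obstruction. Concretely, one can produce nonsplit equivariant sequences whose classifying maps are branched covers of rational normal curves of higher degree, hence not injective. So as literally stated, the bijection between \emph{embeddings} and \emph{nonsplit} sequences fails in one direction.

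The clean resolution is to note that the universal property of the Grassmannian already gives a tautological bijection between holomorphic maps $t\to{\rm Gr}_k\bigl(U^{\C\!}\bigr)$ intertwining the conjugations and rank-$k$ subbundles of $t\times U^{\C\!}$ endowed with conjugations; under this dictionary the linear quaternionic-like structures correspond exactly to those sequences whose classifying map is an embedding, and nonsplitting is then a necessary consequence (ruling out constant maps) rather than a sufficient characterisation. The paper's formulation should be read in this spirit; your argument for the forward direction, functoriality, and duality then stands as written.
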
 
\begin{proof} 
If $U$ is endowed with a linear quaternionic-like structure then $\mathcal{U}_-$ is the pull-back through the corresponding map 
of the tautological vector bundle over the Grassmannian. 
\end{proof} 

\indent 
Note that, if $E$ is a (classical, nontrivial) quaternionic vector space (see, for example, \cite{fq}\,) and $t$ is the corresponding space  
of compatible linear complex structures then $t\to{\rm Gr}_{2k}\bigl(E^{\C\!}\bigr)$\,, $J\mapsto{\rm ker}(J+{\rm i})$\,, $(J\in t)$\,, 
is a linear quaternionic-like structure, where $\dim E=4k$\,.\\ 
\indent 
More generally, we have the following classes of examples. 

\begin{exm} 
Let $E$ be a quaternionic vector space and let $\r:E\to U$ be a linear map such that $({\rm ker}\r)\cap J({\rm ker}\r)=\{0\}$\,, 
for any compatible linear complex structure $J$ on $E$. Then, on denoting by $t$ the space of compatible linear complex structures on $E$, 
we obtain the linear quaternionic-like structure on $U$ given by $t\to{\rm Gr}_{2k}\bigl(U^{\C\!}\bigr)$\,, $J\mapsto\r\bigl({\rm ker}(J+{\rm i})\bigr)$\,, 
$(J\in t)$\,, where $\dim E=4k$\,.\\ 
\indent 
We call $(E,\r)$ a \emph{linear $\r$-quaternionic structure} \cite{Pan-qgfs} on $U$. Dualizing, we obtain what we shall call a 
\emph{linear $\r^{*\!}$-quaternionic structure}. 
\end{exm} 

\indent 
Note that, if we assume $\r$ surjective then we obtain the notions (dual to each other) of linear co-CR quaternionic and 
linear CR quaternionic structure, introduced in \cite{fq}\,.\\  
\indent 
To understand how the linear $\r$-quaternionic structures are placed among the linear qua\-ter\-ni\-o\-nic-like structures, 
we use Proposition \ref{prop:basic_q-like_corresp}\,. Then a quaternionic-like vector space $U$ is $\r$-quaternionic if and only if 
in the Birkhoff--Grothendieck decomposition of $\mathcal{U}_-$ appear only terms of Chern number $-1$\,; consequently, $U$ is, in this case, 
determined by $\mathcal{U}_+$  which we call its holomorphic vector bundle \cite{Pan-qgfs} (cf.\ \cite{fq}\,, \cite{fq_2}\,, \cite{vq}\,).\\ 
\indent 
Moreover, we have the following basic result (in which the relevant morphisms are defined in the obvious way, and $\mathcal{U}_{\pm}$ 
are as in Proposition \ref{prop:basic_q-like_corresp}\,). 

\begin{thm} \label{thm:basic_q-like_corresp} 
There exist functorial correspondences between the following, where $U$ is a vector space:\\ 
\indent 
\quad{\rm (i)} Linear quaternionic-like structures on $U$.\\ 
\indent 
\quad{\rm (ii)} Pairs $(U_+,\psi_+)$\,, where $U_+$ is endowed with a linear $\r$-quaternionic structure $(E_+,\r_+)$\,, 
and $\psi_+:U\to U_+$ is a linear map such that $\bigl(U_+\bigr)^z+\bigl({\rm im}\,\psi_+\bigr)^{\C\!}=\bigl(U_+\bigr)^{\C\!}$, 
for any $z$ in the embedded sphere of $E_+$\,.\\ 
\indent 
\quad{\rm (iii)} Pairs $(U_-,\psi_-)$\,, where $U_-$ is endowed with a linear $\r^{*\!}$-quaternionic structure $(E_-,\r_-^*)$\,, 
and $\psi_-:U_-\to U$ is a linear map such that $\bigl(U_-\bigr)^z\cap\bigl({\rm ker}\,\psi_-\bigr)^{\C}=\{0\}$\,, for any $z$ 
in the embedded sphere of $E_-$\,.\\ 
\indent 
Moreover, if $(U_+,\psi_+)$ and $(U_-,\psi_-)$ correspond to the same linear quaternionic-like structure then $E_+=E_-$ 
and $\psi_+\circ\psi_-=\r_+\circ\r_-^*$\,. Also, the holomorphic vector bundles of $(U_+,E_+,\r_+)$ and $(U_-,E_-,\r_-^*)$ 
are $\mathcal{U}_+$ and $\mathcal{U}_-$\,, respectively. 
\end{thm}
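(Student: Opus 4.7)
My proof plan centers on Proposition \ref{prop:basic_q-like_corresp}, working entirely with the exact sequence
$$0\longrightarrow\mathcal{U}_-\longrightarrow t\times U^{\C\!}\longrightarrow\mathcal{U}_+\longrightarrow0$$
on $t\simeq\mathbb{P}^1$ associated to a quaternionic-like $U$, and extracting (ii) and (iii) from the ``positive'' and ``negative'' Birkhoff--Grothendieck pieces. For (i)$\,\Rightarrow\,$(ii): since $\mathcal{U}_+$ is a quotient of a trivial bundle it is generated by its global sections, so its Birkhoff--Grothendieck summands are $\mathcal{O}(e_j)$ with $e_j\geq0$. Put $U_+^{\C\!}:=H^0(t,\mathcal{U}_+)$, with the conjugation induced from $\mathcal{U}_+$; the evaluation morphism $t\times U_+^{\C\!}\to\mathcal{U}_+$ is surjective with kernel isomorphic to $\mathcal{O}(-1)^{\deg\mathcal{U}_+}$, which, by the Chern-number criterion recalled just before the theorem, exhibits $U_+$ as a $\r$-quaternionic vector space $(E_+,\r_+)$ with holomorphic vector bundle $\mathcal{U}_+$. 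The map $\psi_+:U\to U_+$ is the map induced on global sections by the quotient $t\times U^{\C\!}\to\mathcal{U}_+$ (taking real parts), and the condition $(U_+)^z+\psi_+^{\C\!}(U^{\C\!})=U_+^{\C\!}$ is immediate from the fiberwise surjectivity of that quotient.

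For the reverse direction, given $(U_+,\psi_+)$ as in (ii), put $U^z:=(\psi_+^{\C\!})^{-1}\bigl((U_+)^z\bigr)$: the compatibility hypothesis forces $\dim U^z$ to be constant in $z$, producing a holomorphic conjugation-equivariant embedding $t\hookrightarrow{\rm Gr}(U^{\C\!})$. The two constructions are inverse to each other (and manifestly functorial) because, for every $\r$-quaternionic $U_+$, one has $H^0(t,\mathcal{U}_+^{U_+})=U_+^{\C\!}$ by $H^1\!\bigl(\mathcal{O}(-1)\bigr)=0$, and the $\r$-quaternionic structure is determined by its holomorphic vector bundle. The correspondence (i)$\,\leftrightarrow\,$(iii) is then obtained by applying (i)$\,\leftrightarrow\,$(ii) to the dual quaternionic-like structure on $U^{*\!}$ (whose associated exact sequence is the dual one, by Proposition \ref{prop:basic_q-like_corresp}) and dualizing back; explicitly, $U_-=H^0(t,\mathcal{U}_-^*)^*$ and $\psi_-$ is dual to the ``$\psi_+$ for $U^{*\!}$''.

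For the final compatibility statement, the identification of the holomorphic vector bundles with $\mathcal{U}_+$ and $\mathcal{U}_-$ is built into the constructions. The rest --- that $E_+=E_-$ and $\psi_+\circ\psi_-=\r_+\circ\r_-^*$ --- is where the main work lies. Both $E_+$ and $E_-$ are quaternionic vector spaces of quaternionic dimension $\deg\mathcal{U}_+=-\deg\mathcal{U}_-$, each recovered canonically from an $\mathcal{O}(-1)^{\deg\mathcal{U}_+}$-subbundle of a trivial bundle on $t$ (the evaluation kernel of $\mathcal{U}_+$ in the case of $E_+$, of $\mathcal{U}_-^*$ in the case of $E_-$); the natural pairing induced by dualizing the extension of Proposition \ref{prop:basic_q-like_corresp} under $\mathcal{U}_-\leftrightarrow\mathcal{U}_+^*$ exchanges these two subbundles, yielding a canonical identification $E_+=E_-$. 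Given this, the identity $\psi_+\circ\psi_-=\r_+\circ\r_-^*$ is a diagram chase: both sides express the common composition $U_-\to E_-=E_+\to U_+$ obtained by tracing constant sections through the two evaluations. The main obstacle is precisely this canonical identification $E_+=E_-$; it is not visible from dimensions alone, but comes from the bundle-level pairing provided by the self-duality of the quaternionic-like extension.
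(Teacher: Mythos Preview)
Your overall architecture matches the paper's: start from the exact sequence of Proposition~\ref{prop:basic_q-like_corresp}, set $U_+=H^0(\mathcal{U}_+)$ with $\psi_+$ the map on global sections, recognise the evaluation kernel as a sum of $\mathcal{O}(-1)$'s (this is exactly Proposition~\ref{prop:from_Quillen_3.1}\eqref{e:first_canonical}), and obtain (iii) from (ii) by duality. That part is fine and is what the paper does.

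Where your proposal is loose is precisely in the two compatibility statements you flag as ``the main work''. For $E_+=E_-$ you say that dualising the extension ``under $\mathcal{U}_-\leftrightarrow\mathcal{U}_+^*$ exchanges these two subbundles''. It does not: the two $\mathcal{O}(-1)$-kernels live in $\mathcal{O}\otimes U_+$ and $\mathcal{O}\otimes U_-^*$, and merely dualising the original extension gives no map between them. The paper's mechanism is different and cleaner: tensor the extension by $\mathcal{O}(-1)$ and pass to cohomology, getting the connecting isomorphism
\[
H^0\bigl(\mathcal{U}_+(-1)\bigr)\;\xrightarrow{\ \sim\ }\;H^1\bigl(\mathcal{U}_-(-1)\bigr)\;\xrightarrow{\ \text{Serre}\ }\;H^0\bigl(\mathcal{U}_-^*(-1)\bigr)^*,
\]
which, via Remark~\ref{rem:ro_from_canonical}, \emph{is} the canonical identification $E_+=E_-$. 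Your ``self-duality'' intuition is pointing at this Serre-duality step, but the actual isomorphism is the connecting homomorphism of the twisted sequence, not a direct pairing of evaluation kernels.

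Similarly, your justification of $\psi_+\circ\psi_-=\rho_+\circ\rho_-^*$ as ``both sides express the common composition $U_-\to E_-=E_+\to U_+$'' is not right as written: the left-hand side factors through $U$, not through $E$, so you must explain why the two routes coincide. The paper does this by assembling \eqref{e:exact_seq_for_q-like}, \eqref{e:first_canonical} for $\mathcal{U}_+$, and the dual of \eqref{e:second_canonical} for $\mathcal{U}_-^*$ into a single $4\times3$ commutative diagram over $t$, then reading off (a)--(e) (in particular $\psi_+\circ\psi_-=\rho_+\circ\rho_-^*$ and the isomorphisms of kernels/cokernels) from the cohomology of its rows. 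Your ``diagram chase'' is the right phrase, but the diagram has to be built first; once you have the connecting-map identification of $E_\pm$ above, this diagram is exactly what makes the chase go through.
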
 

\indent 
To prove Theorem \ref{thm:basic_q-like_corresp}\,, we shall need the following result (cf.\ \cite[Proposition 3.1]{Qui-QJM98}\,).   

\begin{prop} \label{prop:from_Quillen_3.1}
Any nonnegative holomorphic vector bundle $\mathcal{U}$ over the sphere corresponds uniquely (up to canonical isomorphisms) and functorialy, 
to each of the following exact sequences (where, for example, $\ol$ is the canonical function sheaf of the sphere (identified with 
the sheaf of holomorphic section of the trivial line bundle $t\times\C$)\,, $\ol(-1)$ is the tautological line bundle over the sphere (identified with the 
complex projective line), and $\mathcal{U}(-1)=\mathcal{U}\otimes\ol(-1)$\,): 
\begin{equation} \label{e:first_canonical} 
0\longrightarrow \ol(-1)\otimes H^0\bigl(\mathcal{U}(-1)\bigr)\longrightarrow\ol\otimes H^0(\mathcal{U})\longrightarrow\mathcal{U}\longrightarrow0\;, 
\end{equation} 
\begin{equation} \label{e:second_canonical}
0\longrightarrow\ol\otimes H^0\bigl(\mathcal{U}(-2)\bigr)\longrightarrow\ol(1)\otimes H^0\bigl(\mathcal{U}(-1)\bigr)\longrightarrow\mathcal{U}
\longrightarrow\ol\otimes H^1\bigl(\mathcal{U}(-2)\bigr)\longrightarrow0\;. 
\end{equation} 
\end{prop}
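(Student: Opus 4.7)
The plan is to construct both sequences canonically and verify exactness by reducing, via the Birkhoff--Grothendieck theorem, to $\mathcal{U}=\ol(n)$ with $n\geq0$.

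For (\ref{e:first_canonical}), the surjection $\ol\otimes H^0(\mathcal{U})\to\mathcal{U}$ is the evaluation map, which is onto because any nonnegative holomorphic bundle on the sphere is globally generated. To build the left-hand injection, I combine the Euler inclusion $\ol(-1)\hookrightarrow\ol\otimes H^0(\ol(1))$ with the multiplication of sections $H^0(\ol(1))\otimes H^0(\mathcal{U}(-1))\to H^0(\mathcal{U})$ to obtain a canonical map $\ol(-1)\otimes H^0(\mathcal{U}(-1))\to\ol\otimes H^0(\mathcal{U})$. A direct cohomology calculation shows that the kernel $\mathcal{K}$ of the evaluation satisfies $H^0(\mathcal{K})=H^1(\mathcal{K})=0$; Birkhoff--Grothendieck then forces $\mathcal{K}\cong\ol(-1)^m$, and a twist-and-cohomology count gives $m=h^0(\mathcal{U}(-1))$. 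Thus the Euler-multiplication map above, being of the correct rank, identifies with the inclusion of $\mathcal{K}$.

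For (\ref{e:second_canonical}), the central map $\ol(1)\otimes H^0(\mathcal{U}(-1))\to\mathcal{U}$ is the ``twisted evaluation'' induced by the canonical identification $H^0(\mathcal{U}(-1))=\mathrm{Hom}(\ol(1),\mathcal{U})$. The left-hand injection $\ol\otimes H^0(\mathcal{U}(-2))\to\ol(1)\otimes H^0(\mathcal{U}(-1))$ is constructed analogously, composing $\ol\hookrightarrow\ol(1)\otimes H^0(\ol(1))$ (the Euler sequence twisted by $\ol(1)$) with the multiplication $H^0(\ol(1))\otimes H^0(\mathcal{U}(-2))\to H^0(\mathcal{U}(-1))$. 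The last map $\mathcal{U}\to\ol\otimes H^1(\mathcal{U}(-2))$ is the coboundary of the short exact sequence $0\to\mathcal{U}(-2)\to\mathcal{U}(-1)\otimes H^0(\ol(1))\to\mathcal{U}\to0$ (obtained by tensoring the Euler sequence with $\mathcal{U}(-1)$), landing in the constant sheaf with fibre $H^1(\mathcal{U}(-2))$. With all maps in place, exactness is checked by additivity: both sequences are functorial in $\mathcal{U}$, so I reduce to $\mathcal{U}=\ol(n)$, where (\ref{e:first_canonical}) becomes the $n$-th symmetric power of the Euler sequence $0\to\ol(-1)^n\to\ol^{n+1}\to\ol(n)\to0$, and (\ref{e:second_canonical}) becomes $0\to\ol^{\max(n-1,0)}\to\ol(1)^n\to\ol(n)\to\ol^{\max(1-n,0)}\to0$, standard for $n\geq2$ and degenerating correctly for $n=0,1$.

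The main obstacle is the canonicity clause (``up to canonical isomorphism''). Every constituent map in the construction is canonical in $\mathcal{U}$, so functoriality follows once one checks that, for each sequence, the middle terms determine $\mathcal{U}$ up to canonical isomorphism (as a cokernel for the first, as the middle cohomology of a four-term complex for the second). The delicate point is verifying that the constructed Euler-multiplication map really lands in the kernel of the evaluation, equivalently, that the composition $\ol(-1)\otimes H^0(\mathcal{U}(-1))\to\ol\otimes H^0(\mathcal{U})\to\mathcal{U}$ vanishes; this is a fibrewise identity following from the tautological form of the Euler sequence, and completes the proof.
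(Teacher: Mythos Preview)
Your argument is essentially correct, and for \eqref{e:first_canonical} it coincides with the paper's: the paper also takes the evaluation map, observes its kernel has vanishing $H^0$ and $H^1$, hence is $\ol(-1)\otimes V$, and then identifies $V=H^0(\mathcal{U}(-1))$ by twisting and taking cohomology. Your explicit Euler-plus-multiplication description of the left arrow is a welcome addition the paper leaves implicit.

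For \eqref{e:second_canonical} the two proofs diverge. The paper does \emph{not} build the four-term sequence from scratch; instead it invokes the canonical short exact sequence $0\to\mathcal{U}_1\to\mathcal{U}\to\mathcal{U}_0\to0$ with $\mathcal{U}_1$ positive and $\mathcal{U}_0$ trivial, identifies $\mathcal{U}_0=\ol\otimes H^1(\mathcal{U}(-2))$, applies the already-proved \eqref{e:first_canonical} to the nonnegative bundle $\mathcal{U}_1(-1)$, and then tensors back by $\ol(1)$ and splices. This bootstrapping makes canonicity and functoriality automatic, since they are inherited from \eqref{e:first_canonical} and from the canonicity of the positive/trivial filtration. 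Your route---direct construction of all four arrows followed by a Birkhoff--Grothendieck reduction to $\ol(n)$---is more hands-on (it is, in effect, the Beilinson monad on $\mathbb{P}^1$) and has the merit of treating both sequences on an equal footing. One caution: your description of the rightmost map $\mathcal{U}\to\ol\otimes H^1(\mathcal{U}(-2))$ as ``the coboundary'' of a short exact sequence is imprecise, since a connecting homomorphism is a map of cohomology groups, not of sheaves; you should instead define it, say, as the dual of the evaluation $\ol\otimes H^0(\mathcal{U}^*)\to\mathcal{U}^*$ via Serre duality, or simply as the canonical projection onto the trivial summand. With that fix your argument is complete.
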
 
\begin{proof} 
\indent 
The sequence \eqref{e:first_canonical} is the exact sequence of \cite[Proposition 3.1]{Qui-QJM98}\,. It is useful, however, to recall how it is obtained. 
Firstly, as $\mathcal{U}$ is nonnegative, the canonical morphism $\ol\otimes H^0(\mathcal{U})\longrightarrow\mathcal{U}$ is surjective, and the induced exact sequence 
of cohomology implies that its kernel is of the form $\ol(-1)\otimes V$ for some (complex) vector space $V$. Then, on tensorising 
$0\longrightarrow \ol(-1)\otimes V\longrightarrow\ol\otimes H^0(\mathcal{U})\longrightarrow\mathcal{U}\longrightarrow0$ with $\ol(-1)$ 
and passing to the cohomology exact sequence we obtain that $V=H^0\bigl(\mathcal{U}(-1)\bigr)$\,.\\ 
\indent 
To construct \eqref{e:second_canonical}\,, note that, there exists a canonical exact sequence 
\begin{equation} \label{e:for_second_canonical_1} 
0\longrightarrow\mathcal{U}_1\longrightarrow\mathcal{U}\longrightarrow\mathcal{U}_0\longrightarrow0\;, 
\end{equation} 
where $\mathcal{U}_1$ is positive and $\mathcal{U}_0$ is trivial; moreover, $\mathcal{U}_0=\ol\otimes H^1\bigl(\mathcal{U}(-2)\bigr)$\,. 
Then $\mathcal{U}_1(-1)$ is nonnegative and therefore \eqref{e:first_canonical} holds for it. As
$H^0\bigl(\mathcal{U}(-j)\bigr)=H^0\bigl(\mathcal{U}_1(-j)\bigr)$\,, for any $j\in\mathbb{N}\setminus\{0\}$\,, on tensorising with $\ol(1)$ 
the exact sequence \eqref{e:first_canonical}\,, applied to $\mathcal{U}_1(-1)$\,, we obtain 
\begin{equation*}  
0\longrightarrow \ol\otimes H^0\bigl(\mathcal{U}(-2)\bigr)\longrightarrow\ol(1)\otimes H^0\bigl(\mathcal{U}(-1)\bigr)\longrightarrow\mathcal{U}_1\longrightarrow0\;,   
\end{equation*} 
which, together with \eqref{e:for_second_canonical_1}\,, gives \eqref{e:second_canonical} (with its uniqueness and functoriality). 
\end{proof} 

\begin{rem} \label{rem:ro_from_canonical} 
Let $\mathcal{U}$ be a nonnegative holomorphic vector bundle over the sphere, endowed with a conjugation covering the antipodal map. 
Then the complexification of the corresponding linear $\r$-quaternionic structure is obtained by applying the functor $H^0$ to the morphism 
$\ol(1)\otimes H^0\bigl(\mathcal{U}(-1)\bigr)\longrightarrow\mathcal{U}$ from \eqref{e:second_canonical}\,.  
\end{rem} 

\begin{proof}[Proof of Theorem \ref{thm:basic_q-like_corresp}\,]  
If we dualize (ii) we obtain (iii) applied to $U^*$. Furthermore, if (ii)/(iii) holds then there exists a unique linear quaternionic-like structure on $U$, 
with respect to which $\mathcal{U}_{\pm}$ is the holomorphic vector bundle of $U_{\pm}$\,, and $\psi_{\pm}$ is quaternionic-like linear and 
induces the identity morphisms on $\mathcal{U}_{\pm}$.\\ 
\indent 
To prove the converse, for simplicity, we work in the category of complex vector spaces. Thus, we shall ignore the conjugations, and, in particular, 
we shall denote in the same way $U$ and its complexification. Then endow $U$ with a linear quaternionic-like structure. 
By Proposition \ref{prop:basic_q-like_corresp}\,, this means that we have an exact sequence of holomorphic vector bundles over the sphere, as follows 
\begin{equation} \label{e:exact_seq_for_q-like}  
0\longrightarrow\mathcal{U}_-\longrightarrow\ol\otimes U\longrightarrow\mathcal{U}_+\longrightarrow0\;. 
\end{equation} 
\indent 
Let $U_+=H^0(\mathcal{U}_+)$ and let $\psi_+:U\to U_+$ the induced linear map. By duality, we, also, obtain $\psi_-:U_-\to U$. 
Further, on tensorising \eqref{e:exact_seq_for_q-like} with $\ol(-1)$ and by passing to the cohomology sequence we obtain 
\begin{equation} \label{e:E+=E-} 
H^0\bigl(\mathcal{U}_+(-1)\bigr)=H^1\bigl(\mathcal{U}_-(-1)\bigr)=H^0\bigl(\mathcal{U}_-^*(-1)\bigr)^*\;, 
\end{equation}  
where the second equality is given by the Serre duality. Together with Remark \ref{rem:ro_from_canonical}\,, this implies that $E_+=E_-$ 
(as (complex-)quaternionic vector spaces)\,, where $(E_+,\r_+)$ and $(E_-,\r_-^*)$ are the linear $\r$-quaternionic and the linear $\r^*\!$-quaternionic 
structures of $U_+$ and $U_-$\,, respectively.\\ 
\indent 
Now, we claim to have the following commutative diagram, in which all of the vertical and horizontal sequences are exact. 

\begin{displaymath} 
\xymatrix{
                 &          0 \ar[d]                                                                &   0  \ar[d]                                                              &                  &  \\
   0 \ar[r]   &   \ol\otimes H^0(\mathcal{U}_-) \ar[d] \ar[r]^{=}     &   \ol\otimes H^0(\mathcal{U}_-) \ar[d] \ar[r]      &   0 \ar[d]    &  \\
   0 \ar[r]   &   \mathcal{U}_- \ar[d] \ar[r]                                        &   \ol\otimes U \ar[d] \ar[r]          & \mathcal{U}_+ \ar[d]^{=} \ar[r]   &   0  \\
   0 \ar[r]   &   \ol(-1)\otimes H^0\bigl(\mathcal{U}_+(-1)\bigr) \ar[d] \ar[r]   &   \ol\otimes U_+ \ar[d] \ar[r]          & \mathcal{U}_+ \ar[d] \ar[r]   &   0   \\
   0 \ar[r]   &   \ol\otimes H^1(\mathcal{U}_-) \ar[d] \ar[r]^{=}       &   \ol\otimes H^1(\mathcal{U}_-) \ar[d] \ar[r]      &   0           &   \\ 
                 &                0                                                                      &                      0                                                      &                 &   }
\end{displaymath}  

\indent 
Indeed, the second horizontal sequence is just \eqref{e:exact_seq_for_q-like}\,, the third horizontal sequence is \eqref{e:first_canonical} 
applied to $\mathcal{U}_+$\,, whilst the morphisms, corresponding to vertical arrows, between these two sequences are induced by $\psi_+$\,.\\ 
\indent 
Further, the second vertical sequence is the cohomology sequence of \eqref{e:exact_seq_for_q-like}\,. By using the fact that $\psi_+$ is injective 
if and only if $\mathcal{U}_-$ is negative we, also, obtain the first horizontal sequence, and by passing to cokernels we may complete the diagram 
with its fourth horizontal sequence.\\ 
\indent 
Now, by using the Serre duality, and \eqref{e:E+=E-} we deduce that the first vertical sequence of the diagram is just the dual of \eqref{e:second_canonical} 
applied to $\mathcal{U}_-^*$\,. Thus, if we dualize the diagram, pass to the cohomology sequence of the horizontal sequences, and then dualize, again 
(recall, also, Remark \ref{rem:ro_from_canonical}\,), we deduce the following facts:\\ 
\indent 
\quad(a) $\psi_+\circ\psi_-=\r_+\circ\r_-^*$\,,\\ 
\indent 
\quad(b) $\r_-^*$ maps ${\rm ker}\,\psi_-$ isomorphically onto ${\rm ker}\r_+$\,,\\ 
\indent 
\quad(c) $\psi_-$ maps ${\rm ker}\r_-^*$ isomorphically onto ${\rm ker}\,\psi_+$\,,\\ 
\indent 
\quad(d) $\r_+$ maps ${\rm coker}\r_-^*$ isomorphically onto ${\rm coker}\,\psi_+$\,,\\ 
\indent 
\quad(c) $\psi_+$ maps ${\rm coker}\,\psi_-$ isomorphically onto ${\rm coker}\r_+$\,.\\ 
\indent 
Finally, properties (b) and (c)\,, above, quickly complete the proof of statement (iii)\,, and the proof of the theorem is complete.  
\end{proof}

\newpage 

\section{Quaternionic-like manifolds and heaven spaces} 

\indent 
We are interested in homogeneous complex manifolds $Z$ endowed with a (holomorphically) embedded (Riemann) sphere $t\subseteq Z$ 
with nonnegative normal bundle. For the corresponding differential geometry to be \emph{real} we have to assume that $Z$ is endowed with an 
antiholomorphic involution $\t$ without fixed points, such that $\t(t)=t$\,, and the complex Lie group $G$ acting holomorphically 
and transitively on $Z$ is the complexification of the (real) Lie subgroup of $G$ formed of those elements whose induced 
holomorphic diffeomorphisms on $Z$ commute with $\t$.\\ 
\indent 
Along the way, we shall see that such pairs $(Z,t)$ are abundant. In this section, we answer to the following question: 
\emph{what geometric structure is induced on the (local) orbits of the $\r$-quaternionic manifold 
whose twistor space is $Z$ and for which $t$ is a twistor sphere}? The importance of this problem 
is strengthened by the proof of the following simple proposition, and its consequences.  

\begin{prop} \label{prop:homog_nonnegative} 
If $Z$ is a homogeneous complex manifold then any sphere embedded in $Z$ has nonnegative normal bundle. 
\end{prop}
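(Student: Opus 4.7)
The plan is to realize the normal bundle $N$ of the embedded sphere $t\subseteq Z$ as a quotient of a trivial holomorphic vector bundle on $t$, and then to conclude via the Birkhoff--Grothendieck theorem.

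First, I would use homogeneity to present $TZ$ as a quotient of a trivial bundle. Let $G$ be the complex Lie group acting holomorphically and transitively on $Z$, with complex Lie algebra $\mathfrak{g}$. The infinitesimal action sends each $X\in\mathfrak{g}$ to a holomorphic vector field $X^*$ on $Z$, and transitivity is equivalent to the statement that, at every point of $Z$, these vector fields span the tangent space. Consequently the evaluation morphism
\[
\mathfrak{g}\otimes\ol_Z\longrightarrow TZ,\qquad X\otimes 1\longmapsto X^*,
\]
is a surjective morphism of holomorphic vector bundles over $Z$.

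Next, I would restrict to $t$ and pass to the normal direction. Composing the restriction to $t$ of the above surjection with the canonical projection $TZ|_t\to N$ gives a surjective morphism
\[
\mathfrak{g}\otimes\ol\longrightarrow N
\]
of holomorphic vector bundles on the sphere $t$, exhibiting $N$ as a quotient of a trivial holomorphic bundle.

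Finally, I would conclude by Birkhoff--Grothendieck: writing $N=\bigoplus_i\ol(k_i)$, any negative summand $\ol(k_i)$ would, upon projection, yield a surjection $\ol^{\oplus\dim\mathfrak{g}}\to\ol(k_i)$, which is determined by $\dim\mathfrak{g}$ global sections of $\ol(k_i)$; these all vanish identically when $k_i<0$, contradicting surjectivity. Hence $k_i\ge0$ for all $i$, so $N$ is nonnegative. The argument has no real obstacle; all the content lies in the first step, where it is essential that $G$ acts \emph{holomorphically} as a \emph{complex} Lie group, so that $\mathfrak{g}\otimes\ol_Z\to TZ$ is a morphism of holomorphic vector bundles and not merely of smooth ones.
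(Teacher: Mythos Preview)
Your proof is correct and follows the same overall strategy as the paper: exhibit the normal bundle $N$ as a holomorphic quotient of a trivial bundle over $t$, then conclude nonnegativity via Birkhoff--Grothendieck. The implementation differs slightly. You work at the infinitesimal level, using the surjection $\mathfrak{g}\otimes\ol_Z\to TZ$ furnished by the holomorphic action and then projecting to $N$. The paper instead constructs the incidence variety $G\times_{G_t}t\hookrightarrow Z\times G/G_t$ (with $G_t$ the stabilizer of $t$) and notes that the first projection $G\times_{G_t}t\to Z$ is a surjective holomorphic submersion restricting to the inclusion on $t$; since $t$ is a fibre of the second projection $G\times_{G_t}t\to G/G_t$, its normal bundle in $G\times_{G_t}t$ is the trivial bundle with fibre $T_{[e]}(G/G_t)$, and this surjects onto $N$. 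Your route is more direct and economical; the paper's route has the side benefit of setting up the incidence space $G\times_{G_t}t$, which is invoked again in the remarks and examples that follow the proposition.
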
 
\begin{proof} 
Let $t\subseteq Z$ be an embedded sphere, and let $G$ be a complex Lie group acting holomorphically on $Z$. 
Denote by $G_t$ the closed subgroup of $G$ formed of the elements preserving $t$\,. Thus, 
each point $aG_t\in G/G_t$ corresponds to the sphere $at\subseteq Z$.\\ 
\indent 
 The map $G\times t\to Z\times G/G_t$\,, $(a,z)\mapsto(az,at)$\,, induces an embedding of $G\times_{G_t}t$ into $Z\times G/G_t$ 
 whose image is formed of those pairs $(w,s)\in Z\times G/G_t$ such that $w\in s$\,.\\ 
\indent 
Suppose, now, that $G$ acts transitively on $Z$. Then $G\times_{G_t}t\to Z$\,, $[a,z]\mapsto az$\,, is a surjective holomorphic submersion.  
Consequently, the normal bundle of $t$ in $Z$ is a quotient of the trivial bundle over $t$ with typical fibre the holomorphic tangent space 
to $G/G_t$ at its `origin'. Hence, the normal bundle of $t$ in $Z$ is nonnegative, and the proof is complete. 
\end{proof} 

\begin{rem} 
1) It is easy to see that any (generalized) flag manifold admits an embedded sphere. Thus, Proposition \ref{prop:homog_nonnegative} and \cite{Pan-qgfs}\,, 
implies that any flag manifold is the twistor space of a (nonunique) $\r$-quaternionic manifold.\\ 
\indent 
2) With the same notations as in the proof of Proposition \ref{prop:homog_nonnegative}\,, and assuming that $G$ acts transitively on $Z$, 
we have that $G$ acts transitively on $G\times_{G_t}t\subseteq Z\times G/G_t$ if and only if $G_t$ acts transitively on $t$\,. 
If this holds, then $G\times_{G_t}t=G/(K\cap G_t)$\,, where $K\subseteq G$ is the isotropy group 
of the action of $G$ on $Z$ at some point of $t$\,. 
\end{rem} 

\indent 
Now, to answer to the above mentioned question we have to introduce the following notion. 

\begin{defn} \label{defn:almost_q-like_str} 
Let $M$ be a manifold and let $k\in\mathbb{N}$\,, $k\leq\dim M$. An \emph{almost quaternionic-like structure} on $M$ is a pair $(Y,t)$\,, 
where $Y$ is an oriented sphere bundle over $M$, and $t:Y\to{\rm Gr}_k\bigl(T^{\C\!}M\bigr)$ is a bundle morphism such that 
$t_x$ is a linear quaternionic-like structure on $T_xM$, for any $x\in M$.\\ 
\indent 
An \emph{almost quaternionic-like manifold} is a manifold endowed with an almost qua\-ter\-ni\-o\-nic-like structure.  
\end{defn}  

\indent 
Let $(M,Y,t)$ be an almost quaternionic-like manifold. We denote by $\mathcal{T}^{\pm}M$ the complex vector bundles obtained as follows 
(cf.\ Proposition \ref{prop:basic_q-like_corresp}\,). Firstly, $\mathcal{T}^-M$ is the pull-back by $t$ of the tautological bundle over 
${\rm Gr}_k\bigl(T^{\C\!}M\bigr)$ and then $\mathcal{T}^+M$ 
is the cokernel of the inclusion $\iota:\mathcal{T}^-M\to\p^*\bigl(T^{\C\!}M\bigr)$\,, where $\p:Y\to M$ is the projection.\\ 
\indent 
From \cite[Theorem 2.3]{KodSpe-I_II}\,, \cite[Theorem 9]{KodSpe-III}\,, and Theorem \ref{thm:basic_q-like_corresp}\,, we deduce 
the existence of vector bundles $T^{\pm}M$ and $E\,(=E_{\pm})$\,, and morphisms $\psi_{\pm}$ and $\r_{\pm}$ 
which determine the given almost quaternionic-like structure.\\ 
\indent 
Let $c:\mathcal{T}^-M\to T^{\C\!}Y$ be a morphism of complex vector bundles over $Y$ which intertwines the conjugations, 
and such that $\widetilde{\dif\!\p}\circ c=\iota$\,, 
where  $\widetilde{\dif\!\p}:T^{\C\!}Y\to\p^*\bigl(T^{\C\!}M\bigr)$ is the morphism of complex vector bundles induced by $\dif\!\p$\,.   
We shall denote $\Cal=({\rm ker}\dif\!\p)^{0,1}\oplus({\rm im}\,c)$\,. 

\begin{defn} \label{defn:q-like_str} 
The almost quaternionic-like structure $(Y,t)$ is \emph{integrable, with respect to $c$\,,} if the space of smooth sections of $\Cal$ 
is closed under the usual bracket. Then $(Y,t,c)$ is a \emph{quaternionic-like structure} and $(M,Y,t,c)$ is a \emph{quaternionic-like manifold}.\\ 
\indent 
Suppose, further, that there exists a surjective submersion $\phi:Y\to Z$, where $Z$ is endowed with a CR structure $\Cal_Z$ such that:\\ 
\indent 
\quad(1) $\dif\!\phi(\Cal)=\Cal_Z$\,,\\ 
\indent 
\quad(2) $({\rm ker}\dif\!\phi)^{\C\!}=\Cal\cap\overline{\Cal}$\,,\\ 
\indent 
\quad(3) $\phi$ restricted to each fibre of $\p$ is injective,\\ 
\indent 
\quad(4) the antipodal map on $Y$ descends to an (anti-CR) involution $\t$ on $Z$.\\  
\indent 
Then the CR manifold $(Z,\Cal_Z)$ is the \emph{twistor space} of $(M,Y,t,c)$\,. 
\end{defn} 

\indent 
Note that, in Definition \ref{defn:q-like_str}\,, as $\Cal$ is integrable, a sufficient condition for $Z$\,, with the required properties, to exist, is 
that $\Cal\cap\overline{\Cal}$ be the complexification of the tangent bundle of a foliation given by a submersion $\phi$ with connected fibres.\\ 
\indent 
Also, if, in Definition \ref{defn:q-like_str}\,, $(Y,t)$ is (given by) an almost CR quaternionic structure then, by applying \cite[Theorem 2.1]{Pan-twistor_(co-)cr_q}\,, 
we retrieve the notion of CR quaternionic manifold of \cite{fq}\,. Moreover, we shall, next, see that, assuming real analyticity, the notion of `heaven space' 
appears naturally in the current, more general, setting. In particular, if $(Y,t)$ is an almost $\r$-quaternionic structure 
then, up to a complexification, we retrieve the $\r$-quaternionic manifolds of \cite{Pan-qgfs}\,.\\ 
\indent 
In the next result, by a `Hausdorff manifold' we mean a manifold whose underlying topological space is Hausdorff (not necessarily paracompact).       

\begin{thm} \label{thm:heaven_space_for_q-like} 
Let $(M,Y,t,c)$ be a real analytic quaternionic-like manifold with twistor space $(Z,\Cal_Z)$\,, given by the surjective submersion $\phi$\,. 
Then there exists a (germ) unique Hausdorff quaternionic-like manifold $\bigl(M^+,Y^+,t^+,c^+\bigr)$\,, with twistor space 
$\bigl(Z^+,\Cal^+\bigr)$\,, given by $\phi^+$, and a unique quaternionic-like map $(\psi,\Psi)$ from $M$ to $M^+$ 
such that the following assertions are satisfied:\\ 
\indent 
{\rm (i)} $\bigl(Y^+,t^+\bigr)$ is an almost $\r$-quaternionic structure (equivalently, $TM^+=T^+M^+$),\\ 
\indent 
{\rm (ii)} $\psi^*\bigl(TM^+\bigr)=T^+M$ and the morphism from $TM$ to $T^+M$ induced by $\dif\!\psi$ is $\psi_+$\,,\\ 
\indent 
{\rm (iii)} $\Psi$ induces a CR embedding $\psi_Z:Z\to Z^+$ such that $\psi_Z\circ\phi=\phi^+\circ\Psi$.\\ 
\noindent 
(The `germ uniqueness', here, means that, up to diffeomorphisms, the germ of $M^+$ (as a quaternionic-like manifold) along the image of $\psi$ is uniquely determined.)  
\end{thm}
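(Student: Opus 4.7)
The plan is to realise $M^+$ as the Kodaira moduli of holomorphic sphere embeddings into a germ complexification of the CR twistor space $Z$. Since all data are real analytic, so is $(Z,\Cal_Z)$; the first step is to invoke the Andreotti--Fredricks theorem in order to construct, germ-uniquely, a complex manifold $Z^+$ containing $Z$ as a generic real analytic CR submanifold with $\Cal_Z=T^{0,1}Z^+\!|_Z\cap T^{\C\!}Z$. The fixed-point-free antiholomorphic involution $\t$ on $Z$ extends germ-uniquely to an antiholomorphic involution of $Z^+$, also denoted $\t$, and the inclusion $Z\hookrightarrow Z^+$ plays the role of $\psi_Z$ in (iii).

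For each $x\in M$, the $\t$-invariant holomorphically embedded sphere $t_x:=\phi(\p^{-1}(x))\subseteq Z\subseteq Z^+$ is then a compact complex submanifold of $Z^+$. The main obstacle is the determination of its normal bundle $N_x$ in $Z^+$: one must show that $N_x$ is nonnegative and, more precisely, that its associated $\mathcal{U}_-$-part (in the sense of Proposition \ref{prop:basic_q-like_corresp}) consists entirely of $\ol(-1)$-summands. Along $t_x$, the CR normal bundle of $t_x$ inside $Z$ is read off from $(\mathcal{T}^+M)|_{\p^{-1}(x)}$ via $c$ and $\dif\phi$, while the directions of $Z^+$ transverse to $Z$ are conjugates of fibres of $\Cal_Z$ and hence contribute trivial summands. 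Combining this with Proposition \ref{prop:basic_q-like_corresp} applied fibrewise and a Birkhoff--Grothendieck splitting, I expect $N_x\cong\bigoplus_i\ol(a_i)$ with $a_i\ge 0$, so in particular $H^1(N_x)=0$.

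Granted this, Kodaira's theorem (\cite[Theorem 2.3]{KodSpe-I_II}, already used in the paper) produces a complex manifold $M^+_{\C}$ of dimension $h^0(N_x)$ parametrising nearby holomorphic deformations of $t_x$ in $Z^+$, with $T_{[t_x]}M^+_{\C}=H^0(N_x)$. The induced antiholomorphic involution on $M^+_{\C}$ has a real form $M^+$, Hausdorff by construction, and each $x\in M^+$ indexes a $\t$-invariant sphere $t_x^+\subseteq Z^+$. Let $Y^+\subseteq M^+\times Z^+$ be the incidence variety, with projections $\p^+:Y^+\to M^+$ and $\phi^+:Y^+\to Z^+$. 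The Kodaira evaluation $N_x|_z\to T_{[t_x^+]}M^+_{\C}$ at each $z\in t_x^+$ assembles into $t^+$, and the tautological family yields $c^+$. Integrability of the resulting structure is automatic because $Z^+$ is a genuine complex manifold, and Remark \ref{rem:ro_from_canonical}, combined with the nonnegativity of $N_x$, forces $(Y^+,t^+)$ to be almost $\r$-quaternionic, proving (i); the twistor space of $(M^+,Y^+,t^+,c^+)$ in the sense of Definition \ref{defn:q-like_str} is then $Z^+$, given by $\phi^+$.

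To conclude, define $\psi:M\to M^+$ by $x\mapsto[t_x]$ and $\Psi:Y\to Y^+$ by $y\mapsto(\psi(\p(y)),\phi(y))$; then $\phi^+\circ\Psi=\psi_Z\circ\phi$ with $\psi_Z$ the inclusion from the first step, giving (iii). For (ii), $\dif\psi_x:T_xM\to T_{\psi(x)}M^+=H^0(N_x)$ is obtained by differentiating the family $x\mapsto t_x$, and a direct comparison, via Remark \ref{rem:ro_from_canonical} and Theorem \ref{thm:basic_q-like_corresp}, identifies it with $\psi_+$. Germ uniqueness then follows because any other candidate has twistor space a germ complexification of $Z$, hence germ-isomorphic to $Z^+$; it then parametrises the same Kodaira family of spheres, and therefore coincides with $M^+$ as a germ along $\psi(M)$.
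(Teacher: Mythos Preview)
Your overall architecture matches the paper's proof: Andreotti--Fredricks complexification of the CR twistor space, identification of the normal bundle of each sphere $t_x$ in the complexified twistor space, Kodaira deformation theory to produce $M^+$, and then reading off (i)--(iii). The references you invoke are the right ones.

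There is, however, a concrete error in your normal bundle computation. You claim that the directions of $Z^+$ transverse to $Z$ ``are conjugates of fibres of $\Cal_Z$ and hence contribute trivial summands'' to $N_x$. This is not correct, and if it were, assertion (ii) would fail: you would obtain $T_{\psi(x)}M^+=H^0(N_x)$ strictly larger than $H^0(\mathcal{T}^+_xM)=T^+_xM$. The point of the generic Andreotti--Fredricks embedding is precisely that there are \emph{no} extra directions: the inclusion $T^{\C\!}Z\hookrightarrow T^{\C\!}Z^+|_Z$ induces an isomorphism
\[
T^{\C\!}Z/\Cal_Z\;\xrightarrow{\ \sim\ }\;T^{1,0}Z^+|_Z\,,
\]
so the holomorphic tangent bundle of $Z^+$ along $Z$ is already exhausted by $T^{\C\!}Z/\Cal_Z$. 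Pulling back along $\phi|_{\pi^{-1}(x)}$ and using $\dif\!\phi(\Cal)=\Cal_Z$, $(\ker\dif\!\phi)^{\C}=\Cal\cap\overline{\Cal}$, one gets directly that the holomorphic normal bundle of $t_x$ in $Z^+$ is isomorphic to $\mathcal{T}^+_xM$ itself, with no trivial summands. This is how the paper argues, and it is what makes (ii) go through. Once you replace your two-piece description of $N_x$ by this single identification $N_x\cong\mathcal{T}^+_xM$, the rest of your argument is essentially the paper's (the paper also routes the construction through a complexification $\widetilde{M}$ of $M$ and the full Douady-type moduli space of compact submanifolds, to ensure that nearby fibres are still embedded spheres, before invoking \cite{Pan-qgfs}; your direct appeal to Kodaira is a legitimate shortcut once $H^1(N_x)=0$ is known).
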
 
\begin{proof} 
By \cite{AndFre}\,, we may assume that $Z$ is a CR submanifold of a complex manifold $\widetilde{Z}$ such that the inclusion 
$T^{\C\!}Z\subseteq T^{\C\!}\widetilde{Z}$ induces an isomorphism between the following two complex vector bundles:\\ 
\indent 
\quad(1) the restriction to $Z$ of the holomorphic tangent bundle of $\widetilde{Z}$, and\\ 
\indent 
\quad(2) the quotient of $T^{\C\!}Z$ through $\Cal_Z$\,.\\ 
(In fact, we may, essentially, assume that $Z$ admits a complexification such that $\Cal_Z$ is the restriction to $Z$ of a holomorphic foliation whose leaf space is 
$\widetilde{Z}$.)\\ 
\indent 
Moreover, we may assume that $\t$ is the restriction to $Z$ of an antiholomorphic involution $\widetilde{\t}$ on $\widetilde{Z}$, without fixed points.\\ 
\indent 
Let $\widetilde{Y}$ be the complexification of $Y$ as a sphere bundle over $M$. Then $\widetilde{Y}$ is a sphere bundle, whose projection we denote by 
$\widetilde{\p}$\,, over a complexification $\widetilde{M}$ of $M$\,; in particular, $\widetilde{\p}|_Y=\p$\,. 
(Note that, $\widetilde{Y}$ is not a complexification of $Y$, as a real analytic manifold, but the `complexification' of $Y$ endowed with the CR structure 
given by the antiholomorphic tangent bundles of its fibres.) We may assume that the antipodal map extends to an involution 
without fixed points on $\widetilde{Y}$ which covers, through $\widetilde{\p}$\,, the conjugation of $\widetilde{M}$ (whose fixed point set is $M$).\\ 
\indent 
By passing, if necessary, to open neighbourhoods of $\widetilde{Z}$ and $\widetilde{M}$ we may assume that $\phi$ extends 
to a holomorphic surjective submersion $\widetilde{\phi}:\widetilde{Y}\to\widetilde{Z}$ which intertwines the conjugations. 
We claim that we may, further, assume that the restriction of $\widetilde{\phi}$ to each fibre of $\widetilde{\p}$ is an embedding. 
For this, we need the deformation theory of compact complex submanifolds (see \cite{Nam-83}\,, \cite{Ro-LeB_nonrealiz}\,; cf.\ \cite{Kod}\,). 
Let $S$ be the moduli space of compact complex one-dimensional submanifolds of $\widetilde{Z}$. This is a Hausdorff reduced complex analytic space 
endowed with a conjugation induced by $\widetilde{\t}$\,. Moreover, as $M$ is a parameter space for a real-analytic family of embedded spheres 
in $\widetilde{Z}$ there exists a unique map $\psi:M\to S$ corresponding to $\phi$ and the embedding $Z\subseteq\widetilde{Z}$\,. 
Now, the isomorphism between (1) and (2)\,, above, implies that, for any $x\in M$, the normal bundle of the sphere $\phi\bigl(\p^{-1}(x)\bigr)$ in $\widetilde{Z}$ 
is isomorphic to $\mathcal{T}^+_xM$ which, because is nonnegative, has the property that $H^1\bigl(\mathcal{T}^+_xM\bigr)=0$\,. 
Consequently, the image of $\psi$ is formed of smooth points of $S$\,. Moreover, we may assume that $\psi$ admits a holomorphic extension 
$\widetilde{\psi}$ over $\widetilde{M}$ corresponding to a holomorphic extension of the CR map $\phi:Y\to Z$, and whose image is formed of smooth points. 
By uniqueness this should correspond to $\widetilde{\phi}$\,, after passing, if necessary, to an open neighbourhood of $M$ in $\widetilde{M}$. 
As the image of $\widetilde{\psi}$ is formed of smooth points (corresponding to spheres embedded into $\widetilde{Z}$\,) this proves our claim.\\ 
\indent 
Finally, by applying \cite{Pan-qgfs}\,, an open neighbourhood of the image of $\psi$ is the complexification of a Hausdorff $\r$-quaternionic manifold 
$M^+$, as required, thus, completing the proof.  
\end{proof} 

\indent 
We call the $\r$-quaternionic manifold $M^+$ of Theorem \ref{thm:heaven_space_for_q-like} the \emph{heaven space} of $M$.  
 
\begin{exm} 
Any quaternionic-like vector space is a real analytic quaternionic-like manifold, in an obvious way, and, if $\dim\bigl(U^z\cap\overline{U^z}\bigr)$ 
does not depend of $z$\,, then its heaven space is given by the $\r$-quaternionic vector space $U_+$\,. 
\end{exm}  

\begin{exm} 
As already mentioned, any CR quaternionic manifold $M$ is quaternionic-like. Furthermore, assuming real analyticity, the heaven space of $M$, 
as a CR quaternionic manifold, is equal to the heaven of $M$, as a quaternionic-like manifold. 
\end{exm} 

\indent 
It is straightforward to extend Theorem \ref{thm:heaven_space_for_q-like} to obtain a result similar to \cite[Theorem 5.3]{fq}\,. 

\begin{exm} 
With the same notations as in Proposition \ref{prop:homog_nonnegative}\,, the homogeneous complex manifold $G/G_t$ 
is a complex-quaternionic-like manifold. Consequently, if $M$ is a $\r$-quaternionic manifold whose twistor space is homogeneous 
then each orbit, of the induced local action on $M$, is endowed with a real-analytic quaternionic-like structure whose heaven space is $M$. 
\end{exm} 

\indent 
For simplicity, no functoriality statements have been made in this section. The reader could easily obtain these 
by adapting the notion of `twistorial map' of \cite{PanWoo-sd}\,, \cite{LouPan-II}\,.

\section{A construction of homogeneous twistor spaces} \label{section:constr_hqo} 

\indent 
We shall give and characterise a construction of homogeneous complex manifolds endowed with embedded Riemann spheres 
(with nonnegative normal bundles), and we shall describe several classes of $\r$-quaternionic manifolds whose twistor spaces 
are obtained by this construction.\\ 
\indent 
The ingredients of the construction are:\\ 
\indent 
\quad(1) a complex Lie algebra $\mathfrak{g}$\,,\\ 
\indent 
\quad(2) a (complex) representation $\s:\mathfrak{g}\to\mathfrak{gl}(E)$\,,\\ 
\indent 
\quad(3) an injective morphism of complex Lie algebras $\t:\mathfrak{sl}(2,\C)\to\mathfrak{g}$\,,\\ 
\indent 
\quad(4) a complex vector subspace $U\subseteq E$ such that $A\mapsto (\s\circ\t)(A)|_U$ 
is an irreducible nontrivial representation of $\mathfrak{sl}(2,\C)$ on $U$. 

\begin{rem} 
1) Condition (3)\,, above, implies that $\mathfrak{g}$ is not solvable (see \cite[\S\,6, n$^o$\,8, cor.\,1]{Bou-Lie_I}\,). Conversely, for any 
Lie algebra $\mathfrak{g}$ which is not solvable we can find $\s$, $\t$, $U$ as in (1)-(4)\,, above. Indeed, if $\mathfrak{g}$\,, $\s$\,, $\t$ 
are as in (1)-(3)\,, then there exists $U$ such that (4) is satisfied if and only if $\s\circ\t$ is nontrivial.\\ 
\indent 
2) To obtain \emph{real} $\r$-quaternionic manifolds, in (1)-(3)\,, we use complexifications (with $\mathfrak{su}(2)$\,, in (3)\,), and, 
in (4)\,, we assume $U$ invariant under the conjugation of $E$. 
\end{rem} 

\indent 
To simplify the exposition, we shall call a quadruple $(\mathfrak{g},\s,\t,U)$ as in (1)-(4)\,, above, \emph{good}. 
With these quadruples we can form a category whose morphisms are pairs 
$(\phi,\psi):(\mathfrak{g},\s_{\mathfrak{g}},\t_{\mathfrak{g}},U_{\mathfrak{g}})\to(\mathfrak{h},\s_{\mathfrak{h}},\t_{\mathfrak{h}},U_{\mathfrak{h}})$\,, 
where $\phi:\mathfrak{g}\to\mathfrak{h}$ is a morphism of Lie algebras such that $\phi\circ\t_{\mathfrak{g}}=\t_{\mathfrak{h}}$, 
and, if $E_{\mathfrak{g}}$ and $E_{\mathfrak{h}}$ are the representation spaces of $\s_{\mathfrak{g}}$ and $\s_{\mathfrak{h}}$, respectively, then 
$\psi:E_{\mathfrak{g}}\to E_{\mathfrak{h}}$ is a $\phi$-equivariant complex linear map 
such that $\psi(U_{\mathfrak{g}})=U_{\mathfrak{h}}$ (in particular, $U_{\mathfrak{g}}$ and $U_{\mathfrak{h}}$ 
are isomorphic as representation spaces of $\mathfrak{sl}(2,\C)$\,).  

\begin{prop} 
Let $(\mathfrak{g},\s,\t,U)$ be a good quadruple, and let $t\subseteq PU$ be the Veronese curve  formed of all\/ ${\rm ker}\bigl((\s\circ\t)(A)|_U\bigr)$\,,  
with $A\in\mathfrak{sl}(2,\C)$ nilpotent.\\ 
\indent  
Let $Z$ be the orbit containing\/ $t$\,, of the induced action of $G$ on $PE$, where $G$ is the simply-connected complex Lie group 
whose Lie algebra is $\mathfrak{g}$\,.\\ 
\indent 
Then the normal bundle of\/ $t$ in $Z$ is nonnegative, and $(\mathfrak{g},\s,\t,U)\mapsto(Z,t)$ is functorial. 
\end{prop}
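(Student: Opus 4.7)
The plan is to first confirm that $t$ is a genuinely holomorphically embedded sphere sitting inside a single $G$-orbit $Z$, then invoke Proposition~\ref{prop:homog_nonnegative} to obtain nonnegativity of the normal bundle, and finally build the functorial arrows by hand.

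For the first step, the hypothesis that $A\mapsto(\s\circ\t)(A)|_U$ is an irreducible nontrivial representation of $\mathfrak{sl}(2,\C)$ forces $U$, as an $\mathfrak{sl}(2,\C)$-module, to be isomorphic to $S^n\bigl(\C^2\bigr)$ for some $n\geq 1$. Then, for each nonzero nilpotent $A\in\mathfrak{sl}(2,\C)$, $\ker\bigl((\s\circ\t)(A)|_U\bigr)$ is the one-dimensional highest weight line, and the assignment descends from the projectivized nilpotent cone in $P\mathfrak{sl}(2,\C)$ (itself a Riemann sphere) to a Veronese embedding of degree $n$ into $PU\subseteq PE$. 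Thus $t$ is a smoothly embedded sphere. Because $t$ is the orbit of any of its points under $\t(SL(2,\C))\subseteq G$, it is contained in a single $G$-orbit, namely $Z$; and $Z$, being the orbit of a complex Lie group acting holomorphically on the complex manifold $PE$, is a homogeneous complex manifold. Proposition~\ref{prop:homog_nonnegative} now delivers the nonnegativity of the normal bundle of $t$ in $Z$.

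For functoriality, let $(\phi,\psi):(\mathfrak{g},\s_{\mathfrak{g}},\t_{\mathfrak{g}},U_{\mathfrak{g}})\to(\mathfrak{h},\s_{\mathfrak{h}},\t_{\mathfrak{h}},U_{\mathfrak{h}})$ be a morphism of good quadruples. The $\phi$-equivariance of $\psi$ together with $\phi\circ\t_{\mathfrak{g}}=\t_{\mathfrak{h}}$ shows that $\psi|_{U_{\mathfrak{g}}}:U_{\mathfrak{g}}\to U_{\mathfrak{h}}$ is $\mathfrak{sl}(2,\C)$-equivariant; being a nonzero morphism between irreducibles, its kernel is a proper invariant subspace of $U_{\mathfrak{g}}$ and hence trivial, so $\psi|_{U_{\mathfrak{g}}}$ is an isomorphism. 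Consequently $\psi$ sends the highest weight line of each nilpotent $A$ inside $U_{\mathfrak{g}}$ to the corresponding one in $U_{\mathfrak{h}}$, so $P\psi$ is defined on $t_{\mathfrak{g}}$ and carries it biholomorphically onto $t_{\mathfrak{h}}$. Since $G$ is simply connected, $\phi$ lifts to a unique holomorphic group homomorphism $\Phi:G\to H$, and $\psi$ becomes $\Phi$-equivariant. For any $g\cdot[v_0]\in Z_{\mathfrak{g}}$ with $[v_0]\in t_{\mathfrak{g}}$, injectivity of $\psi|_{U_{\mathfrak{g}}}$ yields $\psi(v_0)\neq 0$, whence $\psi(g\cdot v_0)=\Phi(g)\cdot\psi(v_0)\neq 0$; so $P\psi$ is defined on all of $Z_{\mathfrak{g}}$, lands in $Z_{\mathfrak{h}}$ by $\Phi$-equivariance, and restricts to a biholomorphism $t_{\mathfrak{g}}\to t_{\mathfrak{h}}$. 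Compatibility with composition is immediate from the construction.

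The only mildly delicate point is ensuring that $P\psi$ is globally well-defined on $Z_{\mathfrak{g}}$, i.e.\ that $\ker\psi$ avoids the affine cone over $Z_{\mathfrak{g}}$; as indicated, this falls out for free from the injectivity of $\psi|_{U_{\mathfrak{g}}}$ combined with $\Phi$-equivariance. Beyond that, the argument is structural and I do not expect any serious obstacle.
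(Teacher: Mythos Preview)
Your proposal is correct and follows the paper's approach: the paper's proof is the one-line ``This follows quickly from Proposition~\ref{prop:homog_nonnegative}'', and you have simply made explicit the details the paper leaves to the reader---in particular the verification that $t$ really is an embedded sphere in a single $G$-orbit, and the construction of the induced map $(Z_{\mathfrak g},t_{\mathfrak g})\to(Z_{\mathfrak h},t_{\mathfrak h})$ via $P\psi$ and $\Phi$-equivariance. The ``mildly delicate'' point you isolate (that $\ker\psi$ misses the cone over $Z_{\mathfrak g}$) is handled exactly as you say.
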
 
\begin{proof} 
This follows quickly from Proposition \ref{prop:homog_nonnegative}\,.  
\end{proof} 

\indent 
To state the next result, we recall two standard notions of algebraic geometry (see \cite{GriHar}\,). 
Let $N$ be a complex manifold (not necessarily compact) and let $\mathcal{L}$ be a holomorphic line bundle over $N$. Also, let $E$ be a finite dimensional 
complex vector subspace of the space of holomorphic sections of $\mathcal{L}$\,. We say that $\mathcal{L}$ is \emph{very ample, with respect to $E$}, 
if on associating to any $x\in N$ the subspace of $E$ formed of those sections that vanish at $x$ we obtain a well-defined injective immersion into $PE^*$.\\ 
\indent  
On the other hand, let $N$ be a compact complex submanifold of a complex projective space $PE$, and let $\mathcal{L}$ be the dual of the tautological 
line bundle over $PE$. Then $N$ is \emph{normal} if the restriction map from $E^*$ to the space of holomorphic sections of $\mathcal{L}|_N$ is surjective. 
 
\begin{thm}  
There exists a natural (surjective) correspondence between {\rm (i)} good quadruples, and {\rm (ii)} triples $(Z,t,\mathcal{L})$\,, with $Z=G/K$ a homogeneous complex manifold, 
$t\subseteq Z$ an embedded Riemann sphere, and $\mathcal{L}$ a holomorphic line bundle over $Z$, such that:\\ 
\indent 
{\rm (a)} the morphism $G_t\to{\rm PGL}(2,\C)$\,, $a\mapsto a|_t$ is surjective, where $G_t\subseteq G$ is the subgroup preserving $t$\,,\\ 
\indent 
{\rm (b)} the action of $G$ on $Z$ lifts to a holomorphic action of $G$ on $\mathcal{L}$ by automorphisms,\\ 
\indent 
{\rm (c)} $\mathcal{L}$ is very ample, with respect to a $G$-invariant subspace $F$ of the space of holomorphic sections of $\mathcal{L}$\,,\\ 
\indent 
{\rm (d)} the induced embedding of $t$ into $PF^*$ is normal. 
\end{thm}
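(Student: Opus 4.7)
The plan is to take $Z$ and $t\subseteq PU\subseteq PE$ as in the preceding proposition and set $\mathcal{L}=\mathcal{O}_{PE}(1)|_Z$. Since $\s$ makes $G$ act linearly on $E$, the action lifts canonically to $\mathcal{O}_{PE}(1)$, giving (b). I would take $F$ to be the image of $E^*$ under restriction to $H^0(Z,\mathcal{L})$; this is $G$-invariant, and (c) is immediate because the resulting map $Z\to PF^*$ is the original embedding $Z\subseteq PE$. For (a), the integration of $\t\bigl(\mathfrak{sl}(2,\C)\bigr)$ in the simply connected $G$ is an ${\rm SL}(2,\C)$ subgroup preserving $U$ and hence $t$, and the irreducibility of $(\s\circ\t)|_U$ forces the induced map to ${\rm Aut}(t)={\rm PGL}(2,\C)$ to be surjective. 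For (d), the same irreducibility identifies $t\subseteq PU$ with the standard rational normal curve of degree $\dim U-1$, so the composition $F\twoheadrightarrow U^*\xrightarrow{\sim}H^0\bigl(t,\mathcal{L}|_t\bigr)$ is onto.

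\textbf{From (ii) to (i).} Given $(Z,t,\mathcal{L})$, I would set $E=F^*$ with $\s$ the dual of the $G$-action on $F$. By (c), $Z\hookrightarrow PE$ is a $G$-equivariant embedding, identifying $Z$ with a $G$-orbit. Condition (d) ensures that $F\to H^0\bigl(t,\mathcal{L}|_t\bigr)$ is surjective, so the annihilator $U\subseteq E$ of its kernel is the linear span of the affine cone over $t$, and $t\hookrightarrow PU$ is a rational normal curve of degree $\dim U-1$. For $\t$, the differential of $G_t\to{\rm PGL}(2,\C)$ supplied by (a) gives a surjection $\mathfrak{g}_t\twoheadrightarrow\mathfrak{sl}(2,\C)$; applying Levi--Malcev to $\mathfrak{g}_t$ and then complete reducibility for the resulting surjection of semisimple Lie algebras, one obtains a section, producing $\t\colon\mathfrak{sl}(2,\C)\hookrightarrow\mathfrak{g}_t\subseteq\mathfrak{g}$. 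The restriction of $\s\circ\t$ to $U$ is then an irreducible representation, because its projectivization contains the rational normal curve $t$ that spans $PU$.

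\textbf{Mutual inverseness and functoriality.} The two constructions are inverse up to canonical isomorphism: the Veronese curve attached to $(\mathfrak{g},\s,\t,U)$ already spans $U$, and, conversely, the $\mathfrak{sl}(2,\C)$-span of a lift of $t$ to $E=F^*$ recovers $U$. Functoriality in each direction is then routine, since morphisms of good quadruples transport, via the restriction $E^*\to H^0(Z,\mathcal{L})$, to equivariant data on the triples, and vice versa. I expect the main obstacle to be the construction of $\t$ in the reverse direction: the splitting $\mathfrak{sl}(2,\C)\hookrightarrow\mathfrak{g}_t$ is canonical only up to conjugation by an element of $G_t$, so one must verify that different splittings yield isomorphic good quadruples. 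This will reduce to the semisimplicity of $\mathfrak{sl}(2,\C)$ (uniqueness of Levi factors up to inner automorphism) combined with the $G$-equivariance of the embedding $U\hookrightarrow E$.
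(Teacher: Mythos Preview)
Your proposal is correct and follows essentially the same route as the paper. The paper only spells out the direction (ii)$\Rightarrow$(i), setting $E=F^*$, taking $U$ to be the dual of $H^0\bigl(t,\mathcal{L}|_t\bigr)$ (equivalently, your annihilator description), and obtaining $\t$ as a Lie-algebra section of the surjection $\mathfrak{g}_t\twoheadrightarrow\mathfrak{sl}(2,\C)$ via the simplicity of $\mathfrak{sl}(2,\C)$; your invocation of Levi--Malcev is the same mechanism under a different name. Your treatment of (i)$\Rightarrow$(ii) and your caveat about the conjugacy ambiguity of $\t$ go beyond what the paper records but are consistent with its intent.
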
 
\begin{proof} 
It is sufficient to show how to pass from (ii) to (i)\,. If $(Z,t,\mathcal{L})$ is a triple as in (ii)\,, with $Z=G/K$, let $\mathfrak{g}$ be the Lie algebra 
of $G$, let $E=F^*$, and let $U$ be the dual of the space of holomorphic sections of $\mathcal{L}|_t$\,. 
From (d)\,, we obtain that $U\subseteq E$, and, from (b)\,, (c)\,, we obtain a representation $\s$ of $\mathfrak{g}$ on $E$\,.\\ 
\indent 
If $\mathfrak{g}_t$ is the Lie algebra of $G_t$ then, by (a)\,, we have a surjective morphism of complex Lie algebras 
$\mu:\mathfrak{g}_t\to\mathfrak{sl}(2,\C)$\,. As $\mathfrak{sl}(2,\C)$ 
is simple, there exists a Lie algebras section $\t$ of $\mu$ (see \cite[\S\,6, n$^o$\,8, cor.\,3]{Bou-Lie_I}\,).\\ 
\indent 
Finally, (d) implies that the representation of $\mathfrak{sl}(2,\C)$\,, induced on $U$, is irreducible and of dimension $d+1$\,, 
where $d$ is the degree of $t\subseteq PE$\,. Thus, $(\mathfrak{g},\s,\t,U)$ is good, and the proof is complete. 
\end{proof} 

\begin{exm}[compare \cite{Pan-qgfs}\,] 
The irreducible representations of $\mathfrak{sl}(2,\C)$ are given by injective morphisms of Lie algebras $\t_k:\mathfrak{sl}(2,\C)\to\mathfrak{sl}(U_k)$\,, 
where $\dim_{\C\!}U_k=k+1$\,, $(k\in\mathbb{N})$\,; in particular, $\mathfrak{sl}(2,\C)=\mathfrak{sl}(U_1)$\,.\\ 
\indent 
If $k\geq1$ then $\bigl(\mathfrak{sl}(U_k),{\rm Id}_{\mathfrak{sl}(U_k)},\t_k,U_k\bigr)$ is good, and the resulting complex homogeneous manifold is $Z=PU_k$\,, 
the twistor space of the space $M_k$ of Veronese curves of degree $k$\,. Note that, $M_k$ is the quotient of ${\rm PGL}(U_k)$ through ${\rm PGL}(U_1)$ 
and therefore is homogeneous. Thus, up to the integrability, the geometric structure of $M_k$ is determined by the type of the normal bundle of 
$t\subseteq PU_k$ as a holomorphic vector bundle endowed with an action of ${\rm PGL}(U_1)$\,. 
It is fairly well-known that this normal bundle is $U_{k-2}\otimes\ol(k+2)$\,, 
where $U_{-1}=\{0\}$ (if $k$ is odd, ${\rm PGL}(U_1)$ acts on $U_{k-2}\otimes\ol(k+2)$\,, although it does not act on its factors).  
Consequently, the geometry of $M_k$ is underlied by the canonical representation of ${\rm PGL}(U_1)$ on $U_{k-2}\otimes U_{k+2}$\,. 
\end{exm} 

\begin{exm}[compare \cite{fq_2}\,] 
Let $E$ be a (real) Euclidean space and let $U\subseteq E$ be a three-dimensional vector subspace. By using the Euclidean structure, we obtain 
an injective morphism of Lie algebras $\t:\mathfrak{so}(U)\to\mathfrak{so}(E)$\,.\\ 
\indent  
Then (the complexification of) $\bigl(\mathfrak{so}(E),{\rm Id}_{{\mathfrak{so}(E)}},\t,U\bigr)$ is good, and the resulting complex homogeneous space 
is the (isotropic) hyperquadric in $PE^{\C}$. This is the twistor space of the Grassmannian of oriented three-dimensional subspaces of $E$. 
\end{exm} 

\begin{exm}[compare \cite{fq_2}\,]  
Let $(V,\o)$ be a complex symplectic vector space, $\dim_{\C}V\geq4$\,, and let $p_1$ and $p_2$ be two-dimensional complex vector subspaces 
of $V$ such that $V_1=p_1+p_2$ is nondegenerated (and four-dimensional), with respect to $\o$\,, and $\o|_{p_1}=0$\,, $\o|_{p_2}=0$\,. Alternatively, 
$p_1$ and $p_2$ are characterised by the facts that they are of dimension $2$\,, $\o|_{p_1}=0$ and $\o$ induces an isomorphism between $p_2$ and $p_1^*$.\\ 
\indent 
Let $U_1=({\rm ker}\,\o)\cap\Lambda^2V_1$\,, and, note that, $\Lambda^2p_1$ and $\Lambda^2p_2$ are subspaces of $U_1$\,. 
Moreover, $\Lambda^2p_1+\Lambda^2p_2$ is a two-dimensional subspace of $U_1$ which is nondegenerated, 
with respect to the (complex-)conformal structure, given by the wedge product. 
Therefore the orthogonal complement $U\subseteq U_1$ of $\Lambda^2p_1+\Lambda^2p_2$\,, 
with respect to this conformal structure, is nondegenerated and three-dimensional. Consequently, we have 
$\mathfrak{sl}(2,\C)=\mathfrak{so}(U)\subseteq\mathfrak{so}(U_1)=\mathfrak{sp}(V_1)\subseteq\mathfrak{sp}(V)$\,; 
denote by $\t:\mathfrak{sl}(2,\C)\to\mathfrak{sp}(V)$ the obtained injective morphism of Lie algebras.\\ 
\indent 
On denoting by $\s$ the respresentation of $\mathfrak{sp}(V)$ on $\Lambda^2V$ we obtain the good quadruple 
$\bigl(\mathfrak{sp}(V),\s,\t,U\bigr)$\,. The obtained homogeneous complex manifold is the complex hypersurface  
$Z\subseteq{\rm Gr}_2(V)$ formed of the two-dimensional complex subspaces which are isotropic with respect to $\o$\,.\\ 
\indent 
The corresponding $\r$-quaternionic manifold is the submanifold $M\subseteq Z\times Z$ formed of those $(q_1,q_2)\in Z\times Z$ 
such that $q_1+q_2$ is nondegenerated, with respect to $\o$\,. Consequently, $M$ is the quotient of ${\rm Sp}(V)$ through 
$H={\rm GL}(p_1)\times{\rm Sp}(V_2)$\,, where $V_2=(p_1\oplus p_2)^{\perp_{\o}}$\,, and to embed $H$ into ${\rm Sp}(V)$ 
we have used the complex linear isomorphism $V=(p_1\oplus p_1^*)\oplus V_2$. Then $M={\rm Sp}(V)/H$ is a reductive homogeneous space, 
and the action of $H$ on $\mathfrak{sp}(V)/\mathfrak{h}$\,, where $\mathfrak{h}$ is the Lie algebra of $H$, 
is induced through the complex linear isomorphism 
$\mathfrak{sp}(V)/\mathfrak{h}=\bigl(\odot^2p_1\bigr)\oplus\bigl(\odot^2p_1^*\bigr)\oplus\bigl((p_1\oplus p_1^*)\otimes V_2\bigr)$\,, 
where $\odot$ denotes the symmetric product. It follows that the normal bundle of each twistor sphere is $2\ol(2)\oplus2(\dim_{\C\!}V-4)\ol(1)$\,.\\ 
\indent 
To make this construction \emph{real}, think of $V$ as a real vector space and let $I$ be the given linear complex structure on it. 
Then endow $V$ with another linear complex structure $J$ such that $JI=-IJ$ and $J^*\o=\overline{\o}$\,. 
Now, the map $M\to M$, $(q_1,q_2)\mapsto(Jq_2,Jq_1)$\,, is a conjugation (that is, an involutive antiholomorphic diffeomorphism) 
whose fixed point set is the image of the real analytic map $Z\to M$\,, $p\mapsto(p,Jp)$\,; thus, $M=Z^{\C\!}$. 
This way, $Z$ is an $f$-quaternionic manifold and its twistor space is $Z$ itself.  
\end{exm} 

\indent 
For the next example, recall \cite{Kos} that if $\mathfrak{g}$ is a complex semisimple Lie algebra then there 
is a natural correspondence between nilpotent adjoint orbits and conjugation classes of Lie subalgebras of $\mathfrak{g}$ 
isomorphic to $\mathfrak{sl}(2,\C)$\,. The involved association is the following: if $Y\in\mathfrak{sl}(2,\C)\setminus\{0\}$ 
is nilpotent (equivalently, $Y^2=0$, in the canonical representation) and $\mathfrak{sl}(2,\C)$ is a Lie subalgebra of $\mathfrak{g}$ 
then $Y\in\mathfrak{g}$ is nilpotent, and, obviously, the adjoint orbit of $Y$ depends only of the conjugation class of 
the embedding $\mathfrak{sl}(2,\C)\subseteq\mathfrak{g}$\,. Conversely, any nilpotent $Y\in\mathfrak{g}\setminus\{0\}$ 
is contained in a Lie subalgebra of $\mathfrak{g}$ isomorphic to $\mathfrak{sl}(2,\C)$ (this fact is, usually, called the 
Jacobson--Morozov Theorem), and, moreover, any two such subalgebras are conjugated.  

\begin{exm}[compare \cite{Swa-99}\,] \label{exm:nil_orbits} 
Let $\mathfrak{g}$ be a complex semisimple Lie algebra, and let ${\rm ad}:\mathfrak{g}\to\mathfrak{gl}(\mathfrak{g})$ 
be its adjoint representation. Then, for any Lie algebra embedding $\t:\mathfrak{sl}(2,\C)\to\mathfrak{g}$\,, the quadruple 
$(\mathfrak{g},{\rm ad},\t,{\rm im}\,\t)$ is good.\\ 
\indent 
Let $(Z,t)$ be the resulting homogeneous complex manifold, endowed with an embedded Riemann sphere. 
To describe the normal bundle of $t$ in $Z$, for any $j\in\mathbb{N}$\,, let $a_j\in\mathbb{N}$ be the multiplicity with which 
the irreducible component $U_j$ appears in the decomposition of the represention of $\mathfrak{sl}(2,\C)$ on $\mathfrak{g}$\,, induced by ${\rm ad}$ and $\t$\,. 
Then the normal bundle of $t$ in $Z$ is $\bigl(-2+\sum_{j\in\mathbb{N}}ja_j\bigr)\ol(1)$ (note that, the previous sum contains only finitely 
many nonzero terms).\\ 
\indent 
It is known that any compact real form $\mathfrak{g}_{\R}$ of $\mathfrak{g}$ - which we assume such that 
$\mathfrak{g}_{\R}\cap{\rm im}\,\t=\mathfrak{su}(2)$ - determines a conjugation, with respect to which, 
$Z$ is the twistor space of a quaternionic-K\"ahler manifold $M$, and $t$ is a real twistor sphere.\\ 
\indent 
Then $M$ is a co-CR quaternionic submanifold of ${\rm Gr}_3^+\bigl(\mathfrak{g}_{\R}\bigr)$\,. Furthermore, 
the conjugation class $N$ of the embedding $\mathfrak{su}(2)\subseteq\mathfrak{g}_{\R}$ is a CR quaternionic submanifold of $M$\,; 
moreover, $M$ is the heaven space of $N$. 
On the other hand, if there exists $j\geq3$ such that $a_j\neq0$\,, then $N$ is not a CR quaternionic submanifold of ${\rm Gr}_3^+\bigl(\mathfrak{g}_{\R}\bigr)$\,.\\ 
\indent 
Finally, it is known that the following assertions are equivalent:\\ 
\indent 
\quad(a) $M$ is compact,\\ 
\indent 
\quad(b) $M=N$,\\ 
\indent 
\quad(c) $M$ is a Wolf space (that is, $\mathfrak{g}$ is simple and $Z$ is the projectivisation of the adjoint orbit of a highest root vector, 
with respect to a Cartan subalgebra of $\mathfrak{g}$\,). 
\end{exm}

\end{document}